\newtheorem{theorem}{Theorem}[section]
\newtheorem{corollary}{Corollary}
\newtheorem{lemma}[theorem]{Lemma}
\newtheorem{proposition}{Proposition}
\theoremstyle{definition}
\newtheorem{remark}{Remark}
\newcommand{\doi}[1]{{\scriptsize 
\textsc{doi}: \href{http://dx.doi.org/#1}{\nolinkurl{#1}}}}
\title[Lyapunov functions for fractional-order systems in biology]{Lyapunov 
functions for fractional-order systems in biology: methods and applications}
\author[Boukhouima, Hattaf, Lotfi, Mahrouf, Torres and Yousfi]{}
\subjclass[2010]{Primary: 34A08, 37B25; Secondary: 92B05.}
\keywords{Nonlinear ordinary differential equations,
fractional calculus,
Caputo derivatives,
Lyapunov analysis,
stability,
mathematical biology.}
\email{adnaneboukhouima@gmail.com}
\email{k.hattaf@yahoo.fr}
\email{lotfiimehdi@gmail.com}
\email{marouane.mahrouf@gmail.com}
\email{delfim@ua.pt}
\email{nourayousfi@hotmail.com}
\thanks{This work is part of first author's Ph.D., 
carried out at the University of Hassan II Casablanca.}
\thanks{$^*$Corresponding author: Delfim F. M. Torres (delfim@ua.pt)}
\begin{document}

\maketitle

\centerline{\scshape Adnane Boukhouima$^a$, Khalid Hattaf$^{a,b}$, El Mehdi Lotfi$^a$,}
\centerline{\scshape Marouane Mahrouf$^a$, Delfim F. M. Torres$^{c,*}$ and Noura Yousfi$^a$}
\medskip
{\footnotesize
\centerline{$^a$Laboratory of Analysis, Modeling and Simulation (LAMS)}
\centerline{Faculty of Sciences Ben M'sik, Hassan II University}
\centerline{P.O. Box 7955 Sidi Othman, Casablanca, Morocco}
\smallskip
\centerline{$^b$Centre R\'egional des M\'etiers de l'Education et de la Formation (CRMEF)}
\centerline{20340 Derb Ghalef, Casablanca, Morocco}
\smallskip
\centerline{$^c$Center for Research and Development in Mathematics and Applications (CIDMA)}
\centerline{Department of Mathematics, University of Aveiro, 3810-193 Aveiro, Portugal}
}


\begin{abstract}
We prove new estimates of the Caputo derivative of order 
$\alpha \in (0,1]$ for some specific functions. The estimations 
are shown useful to construct Lyapunov functions for systems 
of fractional differential equations in biology, based on those 
known for ordinary differential equations, and therefore useful
to determine the global stability of the equilibrium points 
for fractional systems. To illustrate the usefulness of our 
theoretical results, a fractional HIV population model
and a fractional cellular model are studied.
More precisely, we construct suitable Lyapunov functionals 
to demonstrate the global stability of the free and endemic 
equilibriums, for both fractional models, and we also perform 
some numerical simulations that confirm our choices. 
\end{abstract}


\section{Introduction}

Fractional calculus (FC) is the mathematical theory that generalizes 
the integrals and derivatives to real or complex order \cite{MR3561379}. 
During the last decades, FC has gained popularity and importance in diverse fields of science 
and engineering, including biology, physics, chemistry, engineering, finance, 
and control theory, see, e.g., \cite{Capponetto,Cole,Debnath,MR3856654,Hilfer,Scalas,Yuste}. 
Recently, several works have appeared in the literature that deal with various 
applications of FC in real-life problems. In \cite{Jajarmi}, the authors discuss 
general fractional optimal control problems (FOCPs) involving fractional derivatives (FD) 
with singular and non-singular kernels. They derive necessary optimality conditions 
and propose an efficient method for solving, numerically, these problems. 
Y{\i}ld{\i}z et al. \cite{yildiz2020new} formulate new time FOCPs governed 
by Caputo--Fabrizio FD. To solve these problems, they firstly convert them  
into Volterra-type systems and then apply a new numerical scheme based 
on the approximation of Volterra integrals. Since most fractional-order problems 
cannot be solved explicitly, their numerical simulations are of crucial importance
to researchers. In the work of Veeresha et al. \cite{veeresha2020efficient}, 
the existence of solutions for fractional generalized Hirota--Satsuma 
coupled Korteweg-de-Vries (KdV) and coupled modified KdV equations 
are investigated with the aid of a fractional natural decomposition method. 
In a similar way, Singh et al. \cite{singh2020efficient} perform 
a comparison between the reduced differential transform method 
and the local fractional series expansion method for solving 
local fractional Fokker--Planck equations on the Cantor set. 
They confirm that the two proposed methods are very successful 
and simple to solve differential equations with fractional derivative 
operators of local nature. The authors of \cite{Kumar} extend 
the fractional vibration equation for very large membranes, 
with distinct special cases, by considering the Atangana--Baleanu 
fractional derivative. They also employ a numerical algorithm, based 
on the homotopy technique, to examine the fractional vibration equation. 
For that, they show the effects of space, time, and order of the Atangana‐-Baleanu 
derivative on the graphical displacement, and confirm that the Atangana‐-Baleanu 
fractional derivative is very efficient in describing vibrations in large membranes.   

The advantage of fractional differentiation is that it provides 
a powerful tool to model real-world processes with long-range memory, 
long-range interactions, and hereditary properties, which exist 
in most biological systems, as opposed to integer-order differentiation, 
where such effects are neglected \cite{MR3787674,Rossikhin,Saeedian,Zaslavsky}. 
For these reasons, modeling with fractional differential equations (FDEs) 
has attracted the interest of researchers in biology 
\cite{MR3999697,MR4103308,MR4074117}. 
In \cite{jajarmi2019new}, the authors investigate 
a fractional version of the SIRS model for the human 
respiratory syncytial virus (HRSV) disease, involving a new derivative 
operator with Mittag--Leffler kernel in the Caputo sense. They confirm, 
from simulations, that fractional modeling is less costly and more effective, 
than the proposed approach in the classical version of the model, 
to diminish the number of HRSV infected individuals. 
Sajjadi et al. \cite{sajjadi2020new} analyze hyperchaotic behaviors 
of a biological snap oscillator and study the chaos control 
and synchronization via a fractional-order model.  
They conclude that fractional calculus leads to more realistic 
and flexible models with memory effects, which could help to design 
more efficient controllers. Singh et al. \cite{singh2020new} analyze 
the dynamical behavior of a fish farm model, related with the 
Atangana--Baleanu (AB) derivative. They discuss the influence of the derivative 
order on nutrients, fish, and mussels, and show that when this order tends to one, 
then the AB derivative gives interesting results. Elettreby et al. \cite{Elettreby} 
propose a fractional-order species model to study the interaction of a system 
that consists of two-prey and one-predator. They study the stability of the 
equilibria and prove that the coexistence equilibrium points are stable 
without any conditions, in contrast with the corresponding
ordinary differential equations (ODEs) model, where some conditions 
are imposed for the stability of the same points. This means that FDEs 
have a larger stability region than those of ODEs \cite{Elettreby}. 

FDEs have been also successfully applied in epidemiology, 
as well as in virology \cite{Area,MR3872489,MyID:426}. 
Huo et al. \cite{Huo} proposed a fractional 
homogeneous-mixing population model for human immunodeficiency virus (HIV), 
which incorporates anti-HIV preventive vaccines, and studied the backward 
bifurcation of the equilibrium points. They also generalize the integer-order 
LaSalle invariant theorem for fractional-order systems and demonstrate
the global stability of the disease-free equilibrium point \cite{Huo}. 

In the work \cite{Wojtak} of Wojtak et al., the authors investigate
the uniform asymptotic stability of the unique endemic equilibrium 
for a Caputo fractional-order tuberculosis (TB) model. They confirm
that the proposed fractional-order model provides richer and more 
flexible results when compared with the corresponding integer-order 
TB model \cite{MyID:426,Wojtak}. 

In \cite{Parra}, Gonz\'{a}lez-Parra et al. propose a nonlinear 
fractional-order model to explain, and help to understand, the outbreak 
of influenza A(H1N1) worldwide. They show that the fractional-order 
model gives wider peaks and leads to better approximations 
of the real epidemic data \cite{Parra}. 

Rihan et al. \cite{Rihan} develop a fractional-order 
model for hepatitis C dynamics, in order to describe 
the interactions between healthy liver cells $H$, 
infected liver $I$, and virus load $V$. They confirm that the proposed 
model gives consistent results with the reality 
of the interactions \cite{Rihan}. 

In the study of Arafa et al. \cite{Arafa}, the authors compare
the results of the fractional-order model with the ones from 
the integer/classical model, taking into account real data obtained 
from 10 patients during primary HIV infection. They prove that 
the results of the fractional-order model give better predictions 
to the plasma virus load of the patients than those 
of the integer-order model \cite{Arafa}. 

In \cite{singh2018analysis}, the authors study diabetes 
and its complications with the help of the Caputo--Fabrizio 
fractional derivative. They observe, via numerical simulations, 
that when the derivative order is near to one, then the 
Caputo--Fabrizio non-integer order derivative reveals better
absorbing characteristics. For other related works, see, e.g., 
\cite{Area,Boukhouima,Cardoso,Kumar2020new,Li1,Rihan1,singh2017new}.

Stability analysis of FDEs through Lyapunov functions is 
investigated by Delavari et al. \cite{Li}. 
Their method requires to construct a suitable function, 
which is not easy to find in the fractional case. 
In mathematical biology, the stability of equilibrium points, 
via Lyapunov method, is a very effective way to determine 
the global behavior of a system without solving it analytically. 
This is based on the construction of well-chosen Lyapunov functions, 
according to the nature of the system under study. In the literature, 
the most well-known Lyapunov functions are quadratic 
and Volterra-type functions. Accordingly, Aguila-Camacho et al. 
\cite{Aguila-Camacho} extend such quadratic functions to the fractional case 
and then study the stability of fractional-order time-varying systems. 
In 2015, Duarte-Mermoud et al. \cite{Duarte-Mermoud} generalized 
the result of \cite{Aguila-Camacho} to the vector case, in order 
to prove the stability of fractional-order models with reference 
adaptive control schemes. 

Vargas-De-Le\'{o}n uses Volterra-type Lyapunov functions to determine 
the stability of several fractional-order epidemic systems \cite{Vargas}. 
However, quadratic and Volterra-type Lyapunov functions 
can be successfully used to demonstrate the stability 
of the equilibrium points only in particular cases,
and further work is needed.

Motivated by these works, we prove here a new result that estimates 
the Caputo fractional derivative for certain specific functions. 
Based on our result, we are able to construct Lyapunov functions
for systems of FDEs in biology by using the Lyapunov functions 
of corresponding systems formulated by ODEs and, subsequently, 
to establish the global asymptotic stability of constant steady-state solutions.

The paper is organized as follows. In Section~\ref{sec:02}, new inequalities 
to estimate the FD of order $ \alpha\in(0,1]$,  for specific functions, 
are rigorously proved and a detailed description of the proposed method 
is presented with proofs. Then, in Section~\ref{sec:03}, 
we apply our method to study the asymptotic stability of two models 
in virology and epidemiology. We end up with Section~\ref{sec:04} 
of conclusions. 


\section{Description of the method}
\label{sec:02}

Consider an $n$-dimensional autonomous system 
formulated by ordinary differential equations,
\begin{align}
\label{1}
\begin{cases}
\dfrac{du}{dt}=f(u),\\
u(t_{0})=u_{0},
\end{cases}
\end{align}
where $u$ is a non-negative vector of concentration $u_{1},\ldots,u_{n}$ 
and $f:\mathbb{R}^{n}\to \mathbb{R}^{n}$ is a $C^{1}$-function.

Let $V(u)$ be a $C^{1}$-function defined on some domain in $\mathbb{R}^{n}_{+}$. 
When $u(t)$ is a solution of (\ref{1}), it is often necessary to 
compute the time derivative of $ V(u(t))$:
\begin{equation*}
\begin{split}
\dfrac{dV\left(u(t)\right)}{dt}
&=\nabla V\left(u(t)\right)\cdot \dfrac{du(t)}{dt}\\
&=\nabla V\left(u(t)\right)\cdot f(u(t)).
\end{split}
\end{equation*}
We assume that the range of $u(t)$ is contained in the domain of $V(\cdot)$. 
The right-hand side is given by the gradient of function $V(\cdot)$ and the 
vector field $f(\cdot)$. Thus, the right-hand side is defined without the fact 
that $u(t)$ is a solution of (\ref{1}), which is important 
for our construction of Lyapunov functions.

In the literature, many authors define explicit Lyapunov 
functions of the form
\begin{equation}
\label{3}
V(u)=\sum^{n}_{i=1}a_{i}\Psi_{i}(u_{i})
\end{equation}
with $a_{i}>0$ and
\begin{equation}
\label{4}
\begin{split}
\Psi_{i}(u_{i})&=\int^{u_{i}}_{u_{i}^{*}}
\dfrac{g_{i}(s)-g_{i}(u_{i}^{*})}{g_{i}(s)}ds\\
&=u_{i}-u_{i}^{*}-\int^{u_{i}}_{u_{i}^{*}}
\dfrac{g_{i}(u_{i}^{*})}{g_{i}(s)}ds,
\quad \forall i=1,\ldots n,
\end{split}
\end{equation}
where $u^{*}=( u^{*}_{1},\cdots,u^{*}_{n})$ is any equilibrium 
of (\ref{1}), $u^{*}_{i}> 0$ for all $1\leq i \leq n$, 
and $g_{i}$ is a non-negative, differentiable, 
and strictly increasing function on $\mathbb{R}^{+}$, 
see, e.g., \cite{Elaiw,Elaiw1,Georgescu,Hattaf1}.

\begin{remark}
If $ u^{*}_{i}=0 $, then function $ \Psi_{i}(u_{i})$ reduces to
\begin{equation*}
\Psi_{i}(u_{i})= u_{i}.
\end{equation*}
\end{remark}

\begin{remark}
If $ g_{i}(s)=s$, then function $ \Psi_{i}(u_{i}) $ becomes
\begin{equation*}
\Psi_{i}(u_{i})= u_{i}-u_{i}^{*}
-u_{i}^{*}\ln \left( \dfrac{u_{i}}{u_{i}^{*}}\right).
\end{equation*}
\end{remark}

It is easy to see that function $\Psi_{i}$ is strictly positive 
in $\mathbb{R}^{+}\setminus \lbrace u_{i}^{*}\rbrace$ with 
$\Psi_{i}(u_{i}^{*})=0$. In fact, $\Psi_{i}$ is differentiable and 
$$ 
\dfrac{d\Psi_{i} }{du_{i}}=1-\dfrac{g_{i}(u_{i}^{*})}{g_{i}(u_{i})}. 
$$
Since $g_{i}$ is a strictly increasing function, then $\Psi_{i}$ 
is strictly decreasing if $u_{i}< u_{i}^{*}$ and strictly increasing 
if $u_{i}> u_{i}^{*} $, with $ u_{i}^{*} $ its global minimum. 
In this case, we have
\begin{equation}
\label{5}
\dfrac{dV(u(t))}{dt}=\sum^{n}_{i=1}a_{i}\left( 
1-\dfrac{g_{i}(u_{i}^{*})}{g_{i}(u_{i})}\right)f_{i}(u_{i}),
\end{equation}
where 
$$ 
f(u)=\left(f_{1}(u_{1}),\ldots,f_{n}(u_{n})\right)^{T}.
$$
On the other hand, let us consider the following general 
type of fractional-order system:
\begin{align}
\label{6}
\begin{cases}
_{t_{0}}^{C}D_{t}^{\alpha}u(t)=f\left(u(t)\right),
\qquad \alpha \in (0,1],\\
u(t_{0})=u_{0},
\end{cases}
\end{align}
where $_{t_{0}}^{C}D_{t}^{\alpha}$ denotes the Caputo fractional derivative 
of order $\alpha$, defined for the function $u$ by 
\begin{equation}
\label{df}
_{t_{0}}^{C}D_{t}^{\alpha}u(t)
= \dfrac{1}{\Gamma(1-\alpha)}\int^{t}_{t_0}\dfrac{u'(y)}{(t-y)^{\alpha}}dy
\end{equation}
(see, e.g., \cite{Podlubny}). Here, $'$ denotes $\dfrac{d}{dy}$.
We note that system (\ref{6}) has the same equilibrium points as system (\ref{1}).

The Caputo fractional derivative of $V$ along the solution of (\ref{6}) is given by
\begin{equation}
\label{7}
_{t_{0}}^{C}D_{t}^{\alpha}V(u(t))
=\sum^{n}_{i=1}a_{i} \: _{t_{0}}^{C}D_{t}^{\alpha}\Psi_{i}(u_{i}).
\end{equation}
To extend the Lyapunov functions (\ref{3}) to the Caputo fractional-order 
system (\ref{6}), through an inequality that estimates the Caputo fractional 
derivative of these functions, we prove the following lemma, 
which is the main result of this section.

\begin{lemma}
\label{Lemma 3.2}
Let $x(t)\in\mathbb{R}^{+}$ be a continuous and differentiable function. 
Then, for any $t \geq t_{0}$, $0<\alpha\leq1$, and $\bar{x}>0$, we have
\begin{equation}
\label{8}
_{t_{0}}^{C}D_{t}^{\alpha}\Psi(x(t))
\leq \left( 1-\dfrac{g(\bar{x})}{g(x(t))}\right)
\: _{t_{0}}^{C}D_{t}^{\alpha}x(t),
\end{equation}
where
\begin{equation*}
\Psi(x)=x-\bar{x}-\int^{x}_{\bar{x}}\dfrac{g(\bar{x})}{g(s)}ds,
\end{equation*}
with $g:\mathbb{R}^{+}\to \mathbb{R}^{+}$ a differentiable 
and strictly increasing function.
\end{lemma}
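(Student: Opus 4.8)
The plan is to recognize that the factor $1 - g(\bar{x})/g(x(t))$ on the right-hand side is precisely $\Psi'(x(t))$, since differentiating the definition of $\Psi$ gives $\Psi'(x) = 1 - g(\bar{x})/g(x)$, exactly as computed for the $\Psi_i$ before the statement. Thus \eqref{8} is the fractional chain-rule inequality
\begin{equation*}
{}_{t_{0}}^{C}D_{t}^{\alpha}\Psi(x(t)) \leq \Psi'(x(t))\, {}_{t_{0}}^{C}D_{t}^{\alpha}x(t),
\end{equation*}
and the engine driving it will be the convexity of $\Psi$: indeed $\Psi''(x) = g(\bar{x})g'(x)/g(x)^{2} > 0$, using $g>0$, $g'>0$, and $g(\bar{x})>0$. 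When $\alpha = 1$ the Caputo derivative reduces to $d/dt$ and the ordinary chain rule turns \eqref{8} into an equality, so I would dispose of that case immediately and concentrate on $0 < \alpha < 1$.

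For $0<\alpha<1$, I would start from the definition \eqref{df}, use $\tfrac{d}{dy}\Psi(x(y)) = \Psi'(x(y))\,x'(y)$, and write the difference of the two sides of \eqref{8} as
\begin{equation*}
{}_{t_{0}}^{C}D_{t}^{\alpha}\Psi(x(t)) - \Psi'(x(t))\,{}_{t_{0}}^{C}D_{t}^{\alpha}x(t)
= \frac{1}{\Gamma(1-\alpha)}\int_{t_{0}}^{t}\frac{\bigl[\Psi'(x(y)) - \Psi'(x(t))\bigr]x'(y)}{(t-y)^{\alpha}}\,dy.
\end{equation*}
The key device is the auxiliary function $F(y) = \Psi(x(y)) - \Psi(x(t)) - \Psi'(x(t))\bigl(x(y) - x(t)\bigr)$, whose derivative $F'(y) = \bigl[\Psi'(x(y)) - \Psi'(x(t))\bigr]x'(y)$ is exactly the numerator above. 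Convexity of $\Psi$ (its graph lies above the tangent line at $x(t)$) gives $F(y) \geq 0$ for every $y$, while $F(t) = 0$ and $F'(t) = 0$.

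I would then integrate by parts to move the derivative off $F$:
\begin{equation*}
\int_{t_{0}}^{t}\frac{F'(y)}{(t-y)^{\alpha}}\,dy
= \left[\frac{F(y)}{(t-y)^{\alpha}}\right]_{t_{0}}^{t}
- \alpha\int_{t_{0}}^{t}\frac{F(y)}{(t-y)^{\alpha+1}}\,dy.
\end{equation*}
The boundary contribution at $t_{0}$ equals $-F(t_{0})/(t-t_{0})^{\alpha}\leq 0$, the boundary term at $t$ vanishes, and the remaining integral is nonnegative because $F\geq 0$; hence the entire expression is $\leq 0$, which after dividing by $\Gamma(1-\alpha)>0$ yields \eqref{8}.

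I expect the main obstacle to be the singular endpoint $y=t$. I would control it by observing that $F(t)=F'(t)=0$ together with continuity force $F(y) = o(t-y)$ as $y\to t$; this simultaneously makes $F(y)/(t-y)^{\alpha}\to 0$, so the upper boundary term drops, and makes $F(y)(t-y)^{-\alpha-1} = o\bigl((t-y)^{-\alpha}\bigr)$, which is integrable near $t$ precisely because $\alpha<1$. Once these two limiting statements are justified, the sign bookkeeping from $F\geq 0$ is routine, and the convexity inequality $F\geq 0$ is just the tangent-line characterization applied to the convex $\Psi$.
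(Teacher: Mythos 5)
Your proof is correct and is essentially the paper's own argument in different clothing: your auxiliary function satisfies $F(y)=\frac{g(\bar{x})}{g(x(t))}\,w(y)$, where $w(y)=x(y)-x(t)-\int_{x(t)}^{x(y)}\frac{g(x(t))}{g(s)}\,ds$ is exactly the function the paper integrates by parts against, and your convexity/tangent-line justification of $F\geq 0$ is equivalent to the paper's first-derivative-test argument that $\Psi$-type functions are non-negative with minimum at the reference point. The remaining bookkeeping is identical — the paper kills the boundary term at $y=t$ by L'H\^{o}pital where you use $F(y)=o(t-y)$, and both conclude from the signs of the $t_{0}$ boundary term and the residual integral.
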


\begin{proof}
We start by reformulating inequality (\ref{8}). 
By the linearity of the Caputo fractional derivative, 
we obtain that
\begin{equation*}
_{t_{0}}^{C}D_{t}^{\alpha}\Psi(x(t))
=\:_{t_{0}}^{C}D_{t}^{\alpha}x(t)-\:_{t_{0}}^{C}D_{t}^{\alpha}\left[ 
\int_{\bar{x}}^{x(t)}\dfrac{g(\bar{x})}{g(s)}ds\right].
\end{equation*}
Hence, inequality (\ref{8}) becomes
\begin{equation*}
_{t_{0}}^{C}D_{t}^{\alpha}x(t)-\:_{t_{0}}^{C}D_{t}^{\alpha}\left[ 
\int_{\bar{x}}^{x(t)}\dfrac{g(\bar{x})}{g(s)}ds\right]
\leq \left( 1-\dfrac{g(\bar{x})}{g(x)}\right)
\: _{t_{0}}^{C}D_{t}^{\alpha}x(t).
\end{equation*}
Because $g$ is a non-negative function, we get
\begin{equation*}
g(x(t))_{t_{0}}^{C}D_{t}^{\alpha}x(t)
-g(x(t))_{t_{0}}^{C}D_{t}^{\alpha}\left[ 
\int_{\bar{x}}^{x(t)}\dfrac{g(\bar{x})}{g(s)}ds\right]
\leq g(x(t))_{t_{0}}^{C}D_{t}^{\alpha}x(t)
-g(\bar{x})_{t_{0}}^{C}D_{t}^{\alpha}x(t).
\end{equation*}
Thus,  
\begin{equation}
\label{10}
_{t_{0}}^{C}D_{t}^{\alpha}x(t)-g(x(t))_{t_{0}}^{C}D_{t}^{\alpha}\left[ 
\int_{\bar{x}}^{x(t)}\dfrac{1}{g(s)}ds\right]
\leq 0.
\end{equation}
Using the definition of Caputo fractional derivative (\ref{df}), we have
\begin{equation*}
_{t_{0}}^{C}D_{t}^{\alpha}x(t)
=\dfrac{1}{\Gamma(1-\alpha)}
\int^{t}_{t_{0}}\dfrac{x'(y)}{(t-y)^{\alpha}} dy
\end{equation*}
and
\begin{equation*}
_{t_{0}}^{C}D_{t}^{\alpha}\left[ 
\int_{\bar{x}}^{x(t)}\dfrac{1}{g(s)}ds\right]
=\dfrac{1}{\Gamma(1-\alpha)}\int^{t}_{t_{0}}
\dfrac{x'(y)}{(t-y)^{\alpha}g(x(y))} dy.
\end{equation*}
Consequently, the inequality (\ref{10}) can be written as follows:
\begin{equation}
\label{11}
\dfrac{1}{\Gamma(1-\alpha)}
\int^{t}_{t_{0}}\dfrac{x'(y)}{(t-y)^{\alpha}}\left( 
1-\dfrac{g(x(t))}{g(x(y))}\right) dy
\leq 0.
\end{equation}
Now, we show that the inequality (\ref{11}) 
is verified. Denoting
$$
H(t)=\dfrac{1}{\Gamma(1-\alpha)}\int^{t}_{t_{0}}
\dfrac{x'(y)}{(t-y)^{\alpha}}\left( 1-\dfrac{g(x(t))}{g(x(y))}\right) dy,
$$ 
we integrate by parts by defining
\begin{equation*}
v(y)=\dfrac{(t-y)^{-\alpha}}{\Gamma(1-\alpha)}; 
\end{equation*}
$$
v'(y)=\dfrac{\alpha(t-y)^{-(\alpha+1)}}{\Gamma(1-\alpha)};
$$
and
\begin{equation*}
\:w(y)=x(y)-x(t)-\int^{x(y)}_{x(t)}\dfrac{g(x(t))}{g(s)}ds; 
\quad w'(y)=x'(y)\left( 1-\dfrac{g(x(t))}{g(x(y))}\right);
\end{equation*}
to obtain
\begin{equation}
\label{12}
\begin{split}
H(t)
&= \left[ \dfrac{(t-y)^{-\alpha}}{\Gamma(1-\alpha)}\left(x(y)-x(t) 
-\int^{x(y)}_{x(t)}\dfrac{g(x(t))}{g(s)}ds\right) \right]^{y=t}\\
&\quad -\dfrac{(t-t_{0})^{-\alpha}}{\Gamma(1-\alpha)}\left(x(t_{0})-x(t)
-\int^{x(t_{0})}_{x(t)}\dfrac{g(x(t))}{g(s)}ds\right)\\
&\quad -\int^{t}_{t_{0}}\dfrac{\alpha(t-y)^{-(\alpha+1)}}{\Gamma(1-\alpha)}\left(
x(y)-x(t)-\int^{x(y)}_{x(t)}\dfrac{g(x(t))}{g(s)}ds\right) dy.
\end{split}
\end{equation}
We can easily see that the first term in (\ref{12}) is undefined at $u=t$ 
$(\frac{0}{0})$. Let us analyze the corresponding limit. 
By L'H\^{o}pital's rule, we get
\begin{equation*}
\lim_{y\to t}\dfrac{(t-y)^{-\alpha}}{\Gamma(1-\alpha)}\left(
x(y)-x(t)-\int^{x(y)}_{x(t)}\dfrac{g(x(t))}{g(s)}ds\right)
=\lim_{y\to t}\dfrac{x'(y)\left( 1
-\dfrac{g(x(t))}{g(x(y))}\right)}{-\alpha\Gamma(1-\alpha)(t-y)^{\alpha-1}}=0.
\end{equation*}
Hence,
\begin{equation*}
\begin{split}
H(t)
&=-\dfrac{(t-t_{0})^{-\alpha}}{\Gamma(1-\alpha)}\left( x(t_{0})-x(t)
-\int^{x(t_{0})}_{x(t)}\dfrac{g(x(t))}{g(s)}ds\right)\\
&\qquad -\int^{t}_{t_{0}}\dfrac{\alpha(t-y)^{-(\alpha+1)}}{\Gamma(1
-\alpha)}\left(x(y)-x(t)-\int^{x(y)}_{x(t)}\dfrac{g(x(t))}{g(s)}ds\right) dy.
\end{split}
\end{equation*}
From (\ref{4}), we get
\begin{equation*}
H(t)=\dfrac{1}{\Gamma(1-\alpha)}
\int^{t}_{t_{0}}\dfrac{x'(u)}{(t-u)^{\alpha}}\left( 
1-\dfrac{g(x(t))}{g(x(u))}\right) du\leq 0
\end{equation*}
and, as a result, the inequality (\ref{11}) is satisfied. 
This completes the proof.
\end{proof}

A particular case of Lemma~\ref{Lemma 3.2} 
is given in the following corollary.

\begin{corollary}
Let $x(t)\in\mathbb{R}^{+}$ be a continuous and differentiable function. 
Then, for any  $t \geq t_{0}$, $0<\alpha\leq 1$, and $ \bar{x}\geq 0$, 
one has
\begin{equation*}
_{t_{0}}^{C}D_{t}^{\alpha}\left[ x(t)-\bar{x}
-\bar{x}\ln\dfrac{x(t)}{\bar{x}}\right] 
\leq \left( 1-\dfrac{\bar{x}}{x(t)}\right)
\: _{t_{0}}^{C}D_{t}^{\alpha}x(t).
\end{equation*}
\end{corollary}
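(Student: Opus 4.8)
The plan is to obtain the corollary as the specialization of Lemma~\ref{Lemma 3.2} to the identity weight $g(s)=s$, treating the boundary value $\bar{x}=0$ separately, since it falls outside the hypotheses of the lemma.

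First I would verify that $g(s)=s$ is admissible for Lemma~\ref{Lemma 3.2}: it maps $\mathbb{R}^{+}$ into $\mathbb{R}^{+}$, is differentiable, and is strictly increasing. Assuming for the moment that $\bar{x}>0$, I would then compute the function $\Psi$ associated with this choice. Evaluating the defining integral gives
\[
\int_{\bar{x}}^{x}\dfrac{g(\bar{x})}{g(s)}\,ds
=\bar{x}\int_{\bar{x}}^{x}\dfrac{ds}{s}
=\bar{x}\ln\dfrac{x}{\bar{x}},
\]
so that $\Psi(x)=x-\bar{x}-\bar{x}\ln\frac{x}{\bar{x}}$, which is exactly the expression inside the Caputo derivative on the left-hand side of the corollary. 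This is precisely the computation recorded in the second Remark.

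Next I would read off the multiplier appearing on the right-hand side of the lemma's inequality: with $g(s)=s$ one has $1-\frac{g(\bar{x})}{g(x)}=1-\frac{\bar{x}}{x}$. Substituting both the explicit form of $\Psi$ and this simplified multiplier into inequality~(\ref{8}) yields the stated estimate for every $\bar{x}>0$, directly and with no further work.

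The only step requiring care---and hence the main obstacle---is the boundary case $\bar{x}=0$, which Lemma~\ref{Lemma 3.2} excludes (it assumes $\bar{x}>0$, so that $g(\bar{x})>0$ and the logarithm is defined). Here I would invoke the convention of the first Remark: when $\bar{x}=0$ the logarithmic term is taken to vanish and $\Psi$ reduces to $\Psi(x)=x$. Then the left-hand side becomes $_{t_{0}}^{C}D_{t}^{\alpha}x(t)$ and the multiplier $1-\frac{\bar{x}}{x}$ becomes $1$, so both sides of the claimed inequality coincide with $_{t_{0}}^{C}D_{t}^{\alpha}x(t)$ and the estimate holds trivially, as an equality. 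Combining this with the case $\bar{x}>0$ completes the argument.
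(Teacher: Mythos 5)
Your proof is correct and takes essentially the same route as the paper's own proof: specialize Lemma~\ref{Lemma 3.2} to $g(s)=s$, compute $\Psi(x)=x-\bar{x}-\bar{x}\ln\frac{x}{\bar{x}}$, and read off the multiplier $1-\frac{\bar{x}}{x}$. You are in fact slightly more careful than the paper, which invokes the lemma without comment even though the corollary permits $\bar{x}=0$ while the lemma assumes $\bar{x}>0$; your separate treatment of that boundary case, where both sides reduce to $\,_{t_{0}}^{C}D_{t}^{\alpha}x(t)$ and the inequality holds as an equality, closes that small gap.
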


\begin{proof}
Define function $g$ on $[0,+\infty)$ by $g(s)=s$. 
Obviously, function $g$ is a non-negative and 
strictly increasing function on $[0,+\infty)$ with
\begin{equation*}
\psi(x(t))
=x(t)-\bar{x}-\bar{x}\ln\dfrac{x(t)}{\bar{x}}.
\end{equation*}
The result follows by Lemma~\ref{Lemma 3.2}.
\end{proof}

\begin{remark}
We can see that the inequality obtained for Volterra-type 
Lyapunov functions in \cite[Lemma~3.1]{Vargas} 
is a special case of our Lemma~\ref{Lemma 3.2}.
\end{remark}

Finally, using Lemma~\ref{Lemma 3.2}, we estimate the Caputo 
fractional derivative of $V$ in (\ref{7}) through the following inequality:
\begin{equation*}
\begin{split}
_{t_{0}}^{C}D_{t}^{\alpha}V(u(t))
&\leq \sum^{n}_{i=1}a_{i}\: \left( 1
-\dfrac{g_{i}(u_{i}^{*})}{g_{i}(u_{i})}\right)
\: _{t_{0}}^{C}D_{t}^{\alpha}u_{i}(t)\\
&=\sum^{n}_{i=1}a_{i}\: \left( 1
-\dfrac{g_{i}(u_{i}^{*})}{g_{i}(u_{i})}\right)f_{i}(u_{i}).
\end{split}
\end{equation*}
We summarize the above discussion in the following proposition.

\begin{proposition}
\label{prop:2.3}
If $V$ is a Lyapunov function for the ordinary differential equation
(\ref{1}) of the form described by (\ref{3}), then $V$ is also a 
Lyapunov function for the Caputo fractional-order system (\ref{6}).
\end{proposition}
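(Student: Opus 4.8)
The plan is to read this proposition as a consolidation of the estimate displayed immediately above it, fused with the defining features of the ODE Lyapunov function. To qualify as a Lyapunov function for either system, $V$ must meet two requirements: (i) positive definiteness with respect to the common equilibrium $u^{*}$, that is $V(u^{*})=0$ and $V(u)>0$ for $u\neq u^{*}$; and (ii) non-positivity of its derivative along trajectories. Requirement (i) is intrinsic to the pair $(V,u^{*})$ and has nothing to do with whether $u$ evolves by (\ref{1}) or by (\ref{6}); it was already verified above from the fact that each $\Psi_{i}$ is strictly positive on $\mathbb{R}^{+}\setminus\{u_{i}^{*}\}$ with global minimum $u_{i}^{*}$, and the weights satisfy $a_{i}>0$. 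Since, as noted after (\ref{df}), system (\ref{6}) shares the equilibrium points of (\ref{1}), condition (i) transfers verbatim. The entire substance of the proposition therefore resides in establishing (ii) for the Caputo derivative.

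For condition (ii) I would invoke Lemma~\ref{Lemma 3.2} componentwise, applying it to each $\Psi_{i}$ with $\bar{x}=u_{i}^{*}$ and $x(t)=u_{i}(t)$, and then combine the resulting inequalities with the positive weights $a_{i}$. Using the linearity of the Caputo derivative together with (\ref{7}), this produces precisely the estimate recorded just before the proposition, namely $_{t_{0}}^{C}D_{t}^{\alpha}V(u(t))\leq\sum_{i=1}^{n}a_{i}\left(1-\frac{g_{i}(u_{i}^{*})}{g_{i}(u_{i})}\right){}_{t_{0}}^{C}D_{t}^{\alpha}u_{i}(t)$. Next I would insert the fractional dynamics (\ref{6}), replacing each $_{t_{0}}^{C}D_{t}^{\alpha}u_{i}(t)$ by $f_{i}(u_{i})$, to arrive at the bound $\sum_{i=1}^{n}a_{i}\left(1-\frac{g_{i}(u_{i}^{*})}{g_{i}(u_{i})}\right)f_{i}(u_{i})$.

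The decisive observation, which I regard as the conceptual core of the argument, is that this last expression coincides exactly with the right-hand side of (\ref{5}), the ODE time derivative $\frac{dV}{dt}$. As emphasized in the discussion following (\ref{1}), that quantity is a function of the state $u$ alone, equal to $\nabla V(u)\cdot f(u)$ and defined independently of which dynamical law $u$ obeys. Hence the hypothesis that $V$ is a Lyapunov function for the ODE (\ref{1}), which forces this function of $u$ to be non-positive, yields at once $\sum_{i=1}^{n}a_{i}\left(1-\frac{g_{i}(u_{i}^{*})}{g_{i}(u_{i})}\right)f_{i}(u_{i})\leq 0$. Chaining the inequalities gives $_{t_{0}}^{C}D_{t}^{\alpha}V(u(t))\leq 0$ along solutions of (\ref{6}), which together with (i) completes the proof.

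I anticipate no genuine analytic obstacle here, because all the hard technical labor is carried by Lemma~\ref{Lemma 3.2}, already established, and by the earlier positivity discussion. The only point demanding care is conceptual rather than computational: one must treat the ODE ``derivative'' $\nabla V\cdot f$ as a pointwise function of the state, not as an object tethered to a particular solution curve. Once that identification is made explicit, the sign condition proved for the ordinary system passes directly to the fractional bound without any further hypotheses.
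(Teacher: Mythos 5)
Your proof is correct and takes essentially the same route as the paper: apply Lemma~\ref{Lemma 3.2} componentwise (with $\bar{x}=u_{i}^{*}$), sum with the positive weights $a_{i}$ to get ${}_{t_{0}}^{C}D_{t}^{\alpha}V(u(t))\leq\sum_{i=1}^{n}a_{i}\left(1-\frac{g_{i}(u_{i}^{*})}{g_{i}(u_{i})}\right)f_{i}(u_{i})$, and recognize the right-hand side as the state-dependent quantity (\ref{5}), whose non-positivity is exactly the ODE Lyapunov hypothesis. Your explicit remarks that positive definiteness transfers because (\ref{1}) and (\ref{6}) share equilibria, and that $\nabla V\cdot f$ must be read as a pointwise function of the state, merely spell out what the paper's pre-proposition discussion leaves implicit.
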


In other cases, some authors constructed a Lyapunov function
for system (\ref{1}) given by the composition of $V$ and a 
quadratic function $Q$, that is, of the form
\begin{align}
\label{quad functions}
L(u(t))=V(u(t))+Q(u(t)),
\end{align}
where 
\begin{equation*}
Q(u)=\sum_{i=1}^{n}\dfrac{b_{i}}{2}(u_{i}-u_{i}^{*})^{2}, 
\end{equation*}
with $b_{i} \geq 0$ \cite{Cervantes,Buonomo,Maziane}. 
The time derivative of $L$ is given by
\begin{align}
\label{quad functions 1}
\dfrac{dL(u(t))}{dt}=\sum^{n}_{i=1}\left[  a_{i}\left( 
1-\dfrac{g_{i}(u_{i}^{*})}{g_{i}(u_{i})}\right)
+b_{i}(u_{i}-u_{i}^{*})\right] f_{i}(u_{i}).
\end{align}
Therefore, computing the fractional time derivative of $L$ 
by using our Lemma~\ref{Lemma 3.2} and Lemma~1 of 
\cite{Aguila-Camacho}, we obtain that
\begin{equation*}
\begin{split}
_{t_{0}}^{C}D_{t}^{\alpha}L(u(t))
&=\:_{t_{0}}^{C}D_{t}^{\alpha}V(u(t)) +_{t_{0}}^{C}D_{t}^{\alpha}Q(u(t))\\
&\leq\sum^{n}_{i=1}\left[  a_{i}\left( 
1-\dfrac{g_{i}(u_{i}^{*})}{g_{i}(u_{i})}\right)
+b_{i}(u_{i}-u_{i}^{*})\right] f_{i}(u_{i}).
\end{split}
\end{equation*}
Thus, the following result holds.

\begin{corollary}
\label{cor24}
If $L$ is a Lyapunov function for the ordinary 
differential equation (\ref{1}) of the form described by 
(\ref{quad functions}), then $L$ is also a Lyapunov function
for the Caputo fractional-order system (\ref{6}).
\end{corollary}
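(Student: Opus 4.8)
The plan is to treat this as a direct assembly result, mirroring the argument that produced Proposition~\ref{prop:2.3} but now handling the extra quadratic block $Q$ with a second, already-available estimate. The key observation is that ``being a Lyapunov function'' splits into two independent requirements: positive definiteness of $L$ about the equilibrium $u^{*}$, and the correct sign of the (fractional) derivative of $L$ along trajectories. First I would dispatch the positive-definiteness requirement, noting that it is a property of the function $L$ alone and is blind to which dynamics is imposed. Indeed, each $\Psi_{i}$ is strictly positive on $\mathbb{R}^{+}\setminus\{u_{i}^{*}\}$ with $\Psi_{i}(u_{i}^{*})=0$, and each summand $\tfrac{b_{i}}{2}(u_{i}-u_{i}^{*})^{2}$ is non-negative, so $L(u^{*})=0$ and $L(u)>0$ otherwise; since systems (\ref{1}) and (\ref{6}) share the same equilibria, this condition carries over verbatim. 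Only the derivative condition then needs genuine work.

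For the derivative, I would invoke linearity of the Caputo operator to write ${}^{C}D^{\alpha}L={}^{C}D^{\alpha}V+{}^{C}D^{\alpha}Q$. Applying Lemma~\ref{Lemma 3.2} coordinate-by-coordinate gives ${}^{C}D^{\alpha}V(u(t))\leq\sum_{i}a_{i}\bigl(1-\tfrac{g_{i}(u_{i}^{*})}{g_{i}(u_{i})}\bigr)\,{}^{C}D^{\alpha}u_{i}(t)$, exactly the estimate already displayed before Proposition~\ref{prop:2.3}. For the quadratic part I would appeal to Lemma~1 of \cite{Aguila-Camacho}, which asserts $\tfrac{1}{2}\,{}^{C}D^{\alpha}y^{2}(t)\leq y(t)\,{}^{C}D^{\alpha}y(t)$ for any differentiable $y$; choosing $y(t)=u_{i}(t)-u_{i}^{*}$ and using that $u_{i}^{*}$ is constant, so ${}^{C}D^{\alpha}y={}^{C}D^{\alpha}u_{i}$, yields ${}^{C}D^{\alpha}Q(u(t))\leq\sum_{i}b_{i}(u_{i}-u_{i}^{*})\,{}^{C}D^{\alpha}u_{i}(t)$. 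Summing the two bounds and substituting the fractional dynamics ${}^{C}D^{\alpha}u_{i}(t)=f_{i}(u_{i})$ from (\ref{6}) produces the central inequality ${}^{C}D^{\alpha}L(u(t))\leq\sum_{i}\bigl[a_{i}\bigl(1-\tfrac{g_{i}(u_{i}^{*})}{g_{i}(u_{i})}\bigr)+b_{i}(u_{i}-u_{i}^{*})\bigr]f_{i}(u_{i})$.

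I would then close the argument by recognizing the right-hand side as precisely the expression (\ref{quad functions 1}) for $\tfrac{dL}{dt}$ along the ODE (\ref{1}), regarded as a function of the state $u$ alone. The hypothesis that $L$ is a Lyapunov function for (\ref{1}) is exactly the statement that this expression is non-positive throughout the relevant domain; hence ${}^{C}D^{\alpha}L(u(t))\leq 0$ along every solution of the fractional system (\ref{6}), which is the sign condition we needed.

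I do not expect a real obstacle here: all the analytic difficulty has been absorbed into Lemma~\ref{Lemma 3.2} and into the cited quadratic estimate, and this corollary merely glues them together. The one point demanding care is conceptual rather than computational, namely articulating clearly that the positive-definiteness half of the Lyapunov property is inherited automatically, while the derivative half is precisely what the two inequalities secure, so that no additional hypothesis beyond ``$L$ is a Lyapunov function for (\ref{1})'' is ever required.
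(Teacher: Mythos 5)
Your proposal is correct and follows essentially the same route as the paper: split ${}^{C}D^{\alpha}L$ by linearity into the $V$ and $Q$ parts, bound the first by Lemma~\ref{Lemma 3.2} and the second by Lemma~1 of \cite{Aguila-Camacho}, and observe that the resulting upper bound is exactly the ODE expression (\ref{quad functions 1}), which is non-positive by hypothesis. Your additional remark that positive definiteness of $L$ transfers automatically (since it is a property of the function alone and the two systems share equilibria) is a point the paper leaves implicit, but it does not change the argument.
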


Let $D$ be a bounded closed set in $\mathbb{R}^n$. 
Assume that the largest invariant set in 
$$
\left\{u\in D\ | \ \ _{t_{0}}^{C}D_{t}^{\alpha}L(u(t))=0\right\}
$$ 
is just the singleton $\{u^*\}$. Then, 
we get the following result.

\begin{proposition}
\label{prop:2.5}
If (\ref{5}) (respectively (\ref{quad functions 1})) is non-positive, 
then $_{t_{0}}^{C}D_{t}^{\alpha}V(u(t))\leq 0$ (respectively, 
$_{t_{0}}^{C}D_{t}^{\alpha}L(u(t))\leq 0 $). It follows that 
the positive equilibrium $u^*$ of the fractional-order system
(\ref{6}) is globally asymptotically stable.
\end{proposition}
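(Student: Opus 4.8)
The plan is to separate the statement into two assertions: the non-positivity of the Caputo fractional derivatives of $V$ and $L$, which follows immediately from the estimates already established, and the global asymptotic stability of $u^*$, which rests on a fractional Lyapunov argument. For the first assertion, I would observe that the estimate derived just before Proposition~\ref{prop:2.3}, obtained by applying Lemma~\ref{Lemma 3.2} termwise, gives
\begin{equation*}
_{t_{0}}^{C}D_{t}^{\alpha}V(u(t))
\leq \sum^{n}_{i=1}a_{i}\left( 1
-\dfrac{g_{i}(u_{i}^{*})}{g_{i}(u_{i})}\right)f_{i}(u_{i}),
\end{equation*}
whose right-hand side is exactly the expression~(\ref{5}). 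Hence, whenever (\ref{5}) is non-positive, we conclude directly that $_{t_{0}}^{C}D_{t}^{\alpha}V(u(t))\leq 0$. The same reasoning applies to $L$: the estimate preceding Corollary~\ref{cor24} bounds $_{t_{0}}^{C}D_{t}^{\alpha}L(u(t))$ by the expression~(\ref{quad functions 1}), so the non-positivity of the latter forces $_{t_{0}}^{C}D_{t}^{\alpha}L(u(t))\leq 0$. This part is purely a matter of matching the right-hand sides and requires no new computation.

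Next I would verify that $V$ (respectively $L$) is a genuine Lyapunov function in the fractional sense, that is, positive definite with respect to $u^*$. This follows from the properties of $\Psi_{i}$ recorded earlier in the section: each $\Psi_{i}$ is strictly positive on $\mathbb{R}^{+}\setminus\lbrace u_{i}^{*}\rbrace$, vanishes at $u_{i}^{*}$, and attains there its global minimum; the quadratic terms of $Q$ are likewise non-negative and vanish only at $u^*$. Since $a_{i}>0$ and $b_{i}\geq 0$, the function $V$ (respectively $L$) is positive definite about $u^*$ and, together with the non-positivity of its Caputo derivative established above, constitutes a valid Lyapunov function for system~(\ref{6}).

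Finally, I would invoke the fractional Lyapunov direct method of Delavari et al.~\cite{Li} to deduce that the equilibrium $u^*$ is stable, and then appeal to a fractional LaSalle-type invariance principle, in the spirit of Huo et al.~\cite{Huo}, to upgrade stability to global asymptotic stability: on the bounded closed set $D$ the trajectory remains bounded and its limit set is contained in $\left\{u\in D\ | \ _{t_{0}}^{C}D_{t}^{\alpha}L(u(t))=0\right\}$, so the hypothesis that the largest invariant set therein is the singleton $\{u^*\}$ forces every solution to converge to $u^*$. The main obstacle is precisely this last step: unlike the integer-order case, fractional-order systems lack the semigroup property, so the classical $\omega$-limit-set argument behind LaSalle's principle must be replaced by its fractional analogue, and one must check the boundedness and positive invariance conditions required for that theorem to apply.
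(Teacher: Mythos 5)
Your proposal is correct and follows essentially the same route as the paper: the non-positivity claims are read off from the termwise estimates preceding Proposition~\ref{prop:2.3} and Corollary~\ref{cor24}, and convergence to $u^*$ is obtained from the fractional LaSalle invariance principle of Huo et al.~\cite{Huo} (their Lemma~4.6) combined with the standing assumption that the largest invariant set in $\left\{u\in D \mid {}_{t_{0}}^{C}D_{t}^{\alpha}L(u(t))=0\right\}$ is the singleton $\{u^*\}$. If anything, you are more careful than the paper, whose entire proof is the LaSalle citation plus the conclusion $\lim_{t\to+\infty}u(t)=u^*$; your additional checks (positive definiteness of $V$ and $L$, and establishing Lyapunov stability before upgrading attractivity to asymptotic stability) address points the paper leaves implicit.
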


\begin{proof}
By Lemma~4.6 of \cite{Huo}, every solution originating in $D$ 
tends to the largest invariant set of 
$$
\left\{u \in D |\ \ _{t_{0}}^{C}D_{t}^{\alpha}L(u(t))=0\}=\{u \in D |\ \ u=u^*\right\}.
$$
Thus, 
$$
\displaystyle{\lim_{t\to+\infty}u(t)=u^*}.
$$ 
This completes the proof.
\end{proof}


\section{Applications}
\label{sec:03}

In this section, we apply our method to study the stability 
of two fractional-order biological models. Our procedure 
is based on the construction of Lyapunov functions for 
FDEs using Lyapunov functions for ODEs.


\subsection{Example 1: an HIV population model}

In this example, we consider the SICA model of Silva and Torres \cite{Silva}, 
which contains four variables: the susceptible individuals ($S$), 
HIV-infected individuals with no clinical symptoms of AIDS ($I$), 
HIV-infected individuals under ART treatment ($C$), 
and HIV-infected individuals with AIDS clinical symptoms ($A$). 
The model is given by the following nonlinear 
system of differential equations:
\begin{equation}
\label{sys22}
\begin{cases}
\dfrac{dS}{dt}=\Lambda-\mu S(t)-\beta S(t)I(t),\\[5pt]
\dfrac{dI}{dt}= \beta S(t)I(t)-(\rho+\phi+\mu)I(t)+\alpha A(t)+\omega C(t),\\[5pt]
\dfrac{dC}{dt}=\phi I(t)-(\omega+\mu) C(t),\\[5pt]
\dfrac{dA}{dt}=\rho I(t)-(\alpha+\mu+d)A(t).
\end{cases}
\end{equation} 
The basic reproduction number of system (\ref{sys22}), which represents the
expected average number of new HIV infections produced by a
single HIV-infected individual when in contact with a completely
susceptible population, is given by
$$ 
R_{0}=\dfrac{\beta\xi_{1}\xi_{2}}{\mathcal{N}},
$$
where $\xi_{1}=\alpha+\mu+d$, $\xi_{2}=\omega+\mu$, 
and $\mathcal{N}=\mu[\xi_{2}(\rho+\xi_{1})+\xi_{1}\phi+\rho d]+\rho \omega d$.
Silva and Torres proved that if $R_{0}>1$, then system (\ref{sys22}) has 
an endemic equilibrium $E^{*}=\left(S^{*},I^{*},C^{*},A^{*}\right)$, which is globally 
asymptotically stable \cite{Silva}. Their proof is given by constructing 
a Lyapunov function for system (\ref{sys22}) at $E^{*}$ as follows:
\begin{align*}
V_{1}(S,I,C,A)&= \Psi_{1}(S)+\Psi_{2}(I)
+\dfrac{\omega}{\xi_{2}}\Psi_{3}(C)+\dfrac{\alpha}{\xi_{1}}\Psi_{4}(A)\\
&=S-S^{*}-\int^{S}_{S^{*}}\dfrac{S^{*}}{X}dX + I
-I^{*}-\int^{I}_{I^{*}}\dfrac{I^{*}}{X}dX\\
&\qquad +\dfrac{\omega}{\xi_{2}}\left( C-C^{*}
-\int^{C}_{C^{*}}\dfrac{C^{*}}{X}dX\right) 
+\dfrac{\alpha}{\xi_{1}}\left( A-A^{*}
-\int^{A}_{A^{*}}\dfrac{A^{*}}{X}dX\right).
\end{align*}
The time derivative of $V_{1}$ is computed as
\begin{align*}
\dfrac{dV_{1}}{dt}
&=(\beta I^{*} S^{*}+\mu S^{*})\left( 2-\dfrac{S}{S^{*}}
-\dfrac{S^{*}}{S}\right)+\alpha A^{*}\left( 
2-\dfrac{AI^{*}}{A^{*}I}-\dfrac{A^{*}I}{AI^{*}}\right)\\
&\qquad +\omega C^{*}\left( 2-\dfrac{CI^{*}}{C^{*}I}
-\dfrac{C^{*}I}{CI^{*}}\right) \leq 0.
\end{align*}
However, when $R_{0}<1 $, the global stability of the disease-free 
equilibrium 
$$
E_{f} = (S_{0},0,0,0), 
$$
where $S_{0}=\frac{\Lambda}{\mu}$, 
was determined without using a Lyapunov function. Here we discuss 
the global stability of $E_{f}$ when $R_0\leq 1$. For this, we 
construct a Lyapunov function for system (\ref{sys22}) at $E_{f}$:
\begin{align*}
V_{0}(S,I,C,A)
&= \Psi_{1}(S)+\Psi_{2}(I)+\dfrac{\omega}{\xi_{2}}\Psi_{3}(C)
+\dfrac{\alpha}{\xi_{1}}\Psi_{4}(A)\\
&=S-S_{0}-\int^{S}_{S_{0}}\dfrac{S_{0}}{X}dX + I
+\dfrac{\omega}{\xi_{2}}C +\dfrac{\alpha}{\xi_{1}}A.
\end{align*}
The time derivative of $ V_{0} $ along the solutions 
of system (\ref{sys22}) satisfies
\begin{align*}
\dfrac{dV_{0}}{dt}
&=\left(1-\dfrac{S_0}{S}\right)\dfrac{dS}{dt}+\dfrac{dI}{dt}
+\dfrac{\omega}{\xi_2}\dfrac{dC}{dt}+\dfrac{\alpha}{\xi_1}\dfrac{dA}{dt}\\
&\leq -\dfrac{\mu (S-S_{0})^{2}}{S}
+\dfrac{\mathcal{N}}{\xi_{1}\xi_{2}}I(R_{0}-1).
\end{align*}
Therefore, $\dfrac{dV_{0}}{dt}\leq0$ if $ R_{0}\leq 1$. Furthermore, 
the largest compact invariant set in 
$$
\left\{(S,I,C,A)\mid\frac{d V_0}{dt}=0\right\}
$$ 
is just the singleton $E_f$. Using LaSalle's invariance principle \cite{LaSalle}, 
we conclude that $E_f$ is globally asymptotically stable.

Now, we propose the following fractional-order SICA model defined by
\begin{equation}
\label{sys24}
\begin{cases}
_{0}^{C}D_{t}^{\theta}S(t)
=\Lambda-\mu S(t)-\beta S(t)I(t),\\
_{0}^{C}D_{t}^{\theta}I(t)
= \beta S(t)I(t)-(\rho+\phi+\mu)I(t)+\alpha A(t)+\omega C(t),\\
_{0}^{C}D_{t}^{\theta}C(t)
=\phi I(t)-(\omega+\mu) C(t),\\
_{0}^{C}D_{t}^{\theta}A(t)
=\rho I(t)-(\alpha+\mu+d)A(t),
\end{cases}
\end{equation}
where $0<\theta\leq 1$, subject to the initial conditions
\begin{equation}
\label{inini}
S(0)\geq 0,\quad I(0)\geq 0,\quad C(0)\geq 0, \quad A(0)\geq 0.
\end{equation}

\begin{remark}
Following \cite{Boukhouima1}, one can easily prove that system 
\eqref{sys24}--\eqref{inini} has a unique solution for any $t>0$.
\end{remark}

Applying Proposition~\ref{prop:2.5}, we have 
$$
_{0}^{C}D_{t}^{\theta}V_0(S,I,C,A)\leq 0,
\quad \text{ when } R_0\leq 1 
$$
and 
$$
_{0}^{C}D_{t}^{\theta}V_1(S,I,C,A)\leq 0,
\quad \text{ when } R_0 > 1.
$$ 
Then, the following result holds.

\begin{theorem}
\label{thm}
Suppose that $0<\theta\leq1$.
\begin{enumerate}
\item[(i)] If $R_0\leq1$, then the disease-free equilibrium 
$E_f$ is globally asymptotically stable.
\item[(ii)] If $R_0>1$, then the endemic equilibrium 
$E^*$ is globally asymptotically stable.
\end{enumerate}
\end{theorem}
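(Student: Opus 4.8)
The plan is to transport to the fractional setting the two Lyapunov computations already carried out for the integer-order model (\ref{sys22}), invoking Proposition~\ref{prop:2.3} and then Proposition~\ref{prop:2.5}. The essential observation is that both $V_0$ and $V_1$ are of the Volterra form (\ref{3})--(\ref{4}) with $g_i(s)=s$ for every component: the $S$-component is a genuine Volterra term, while each component whose equilibrium value vanishes reduces to a linear term. Hence every summand falls within the scope of Lemma~\ref{Lemma 3.2} and its corollary, so the mechanism of Proposition~\ref{prop:2.3} applies: the Caputo derivative of each Lyapunov function is bounded above by the very expression obtained by formally substituting the vector field into $\sum_i a_i(1-g_i(u_i^*)/g_i(u_i))f_i$, which is exactly the integer-order time derivative already computed in the excerpt.

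For part~(i), I would apply Lemma~\ref{Lemma 3.2} term by term to $V_0$ at $E_f$ (here $I^*=C^*=A^*=0$, so three terms are linear) to obtain
$$
{}_{0}^{C}D_{t}^{\theta}V_0 \le \left(1-\frac{S_0}{S}\right){}_{0}^{C}D_{t}^{\theta}S + {}_{0}^{C}D_{t}^{\theta}I + \frac{\omega}{\xi_2}\,{}_{0}^{C}D_{t}^{\theta}C + \frac{\alpha}{\xi_1}\,{}_{0}^{C}D_{t}^{\theta}A .
$$
Replacing the Caputo derivatives on the right by the right-hand sides of (\ref{sys24}) reproduces verbatim the expression already bounded above by $-\mu(S-S_0)^2/S + (\mathcal{N}/\xi_1\xi_2)\,I(R_0-1)$, which is non-positive precisely when $R_0\le1$. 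Therefore ${}_{0}^{C}D_{t}^{\theta}V_0\le0$, and Proposition~\ref{prop:2.5} yields the global asymptotic stability of $E_f$.

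For part~(ii), the same scheme applied to the Volterra function $V_1$ at $E^*$ gives, after substituting the dynamics (\ref{sys24}), the bound
$$
{}_{0}^{C}D_{t}^{\theta}V_1 \le (\beta I^{*} S^{*}+\mu S^{*})\Big(2-\tfrac{S}{S^{*}}-\tfrac{S^{*}}{S}\Big) + \alpha A^{*}\Big(2-\tfrac{AI^{*}}{A^{*}I}-\tfrac{A^{*}I}{AI^{*}}\Big) + \omega C^{*}\Big(2-\tfrac{CI^{*}}{C^{*}I}-\tfrac{C^{*}I}{CI^{*}}\Big),
$$
each bracket being non-positive by the arithmetic--geometric mean inequality, so ${}_{0}^{C}D_{t}^{\theta}V_1\le0$ for $R_0>1$; Proposition~\ref{prop:2.5} then gives the global asymptotic stability of $E^*$. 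The step I expect to demand the most care is not these algebraic bounds, which are inherited from the ODE analysis, but verifying the hypotheses of Proposition~\ref{prop:2.5}: one must first secure a bounded, positively invariant closed set $D$ containing the trajectories of (\ref{sys24}) (nonnegativity and boundedness of solutions), and then confirm that the largest invariant subset of $\{\,{}_{0}^{C}D_{t}^{\theta}V=0\,\}$ collapses to the relevant singleton, so that the fractional LaSalle-type argument of Lemma~4.6 of \cite{Huo} underlying Proposition~\ref{prop:2.5} is legitimately applicable.
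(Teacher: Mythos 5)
Your proposal is correct and follows essentially the same route as the paper: both transfer the ODE Lyapunov computations for $V_0$ and $V_1$ to the fractional system via Lemma~\ref{Lemma 3.2} (i.e., Proposition~\ref{prop:2.3}), obtain ${}_{0}^{C}D_{t}^{\theta}V_0\leq 0$ when $R_0\leq 1$ and ${}_{0}^{C}D_{t}^{\theta}V_1\leq 0$ when $R_0>1$, and conclude by the LaSalle-type invariance argument of Proposition~\ref{prop:2.5} (Lemma~4.6 of \cite{Huo}) after identifying the largest invariant set in $\{{}_{0}^{C}D_{t}^{\theta}V=0\}$ with the corresponding singleton. If anything, you are more careful than the paper, which simply asserts the invariant-set condition as obvious rather than flagging, as you do, that it and the boundedness hypothesis on $D$ must be verified.
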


\begin{proof}
(i) Obviously, the largest invariant set in 
$$
\lbrace(S,I,C,A) \in \mathbb{R}^{4}_{+}\mid 
\, _{0}^{C}D_{t}^{\theta}V_0(S,I,C,A)=0\rbrace
$$ 
is just the singleton $ E_f $. By LaSalle's invariance principle 
in \cite{Huo}, $E_f$ is globally asymptotically stable.

\noindent (ii) It is easy to see that the largest invariant set in 
$$
\lbrace(S,I,C,A) \in \mathbb{R}^{4}_{+}\mid 
\, _{0}^{C}D_{t}^{\theta}V_1(S,I,C,A)=0\rbrace
$$ 
is just the singleton $E^*$. Using LaSalle's invariance principle, 
we conclude that $E_f$ is globally asymptotically stable.
\end{proof}

Finally, we present some numerical simulations to illustrate the stability 
results of model (\ref{sys24})--(\ref{inini}), for different values of 
$\theta$. We consider the following parameter values:
\begin{gather*}
\Lambda=10724, \quad \mu=1/69.54, \quad \beta=0.066, 
\quad \rho=0.1,\\ 
\phi=1, \quad \alpha= 0.33, \quad \omega=0.09, \quad d=1.
\end{gather*}
A direct calculation gives $R_0 = 0.2900$, which satisfies 
item (i) of Theorem~\ref{thm}. Then, the disease-free equilibrium 
$E_f = (7.4575\times 10^5, 0, 0, 0)$ 
is globally asymptotically stable, 
which leads to the eradication of HIV and AIDS from the population. 
Numerical simulations illustrate this result (see Figure~\ref{fig1}).
\begin{center}
\begin{figure}[ht!]
\includegraphics[scale=0.7]{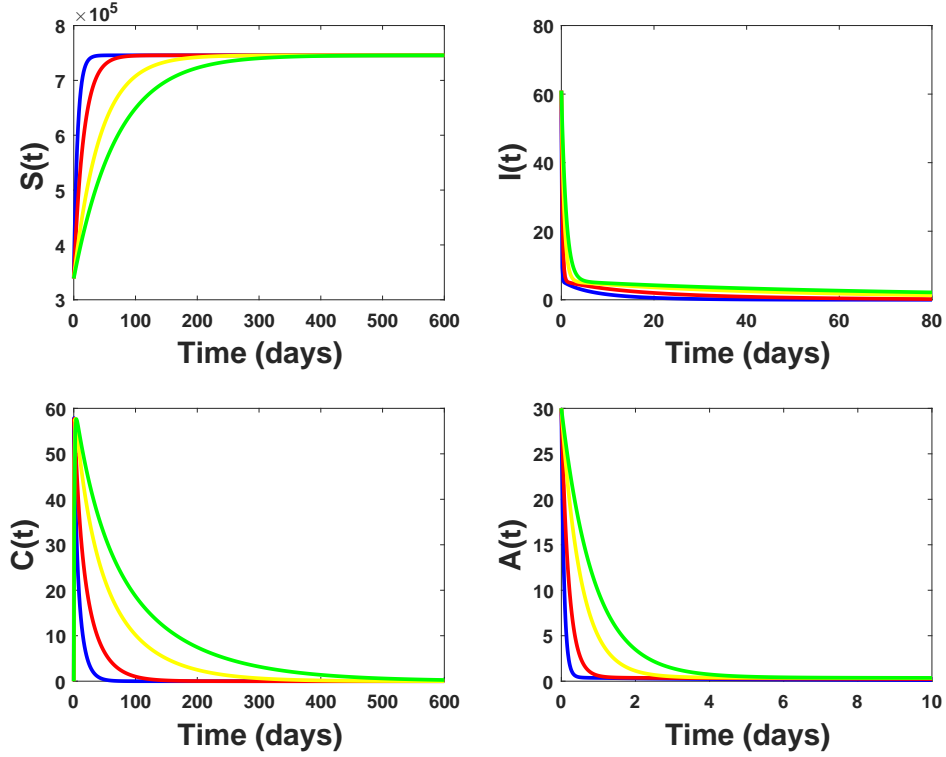}
\captionof{figure}{Stability of the disease-free equilibrium 
$E_{f}$ for the fractional-order SICA model \eqref{sys24} 
with $\theta=0.5$ (blue), $\theta=0.7$ (red), $\theta=0.9$ (yellow), 
and $\theta=1$ (green).}
\label{fig1}
\end{figure}
\end{center}
In Figure~\ref{fig2}, we choose $\beta= 0.866$, while keeping 
the other parameter values as before. In this case, we have 
$R_0 = 3.8049$ and system (\ref{sys24})--(\ref{inini}) has 
an endemic equilibrium $E^* = (0.8909 \times 10 ^{5},4.1489
\times 10 ^{4},3.9748\times 10 ^{5},3.0861)$. 
Hence, by item (ii) of Theorem~\ref{thm}, 
$E^*$ is globally asymptotically stable, which means 
that the disease persists in the population.
\begin{center}
\begin{figure}[ht!]
\includegraphics[scale=0.7]{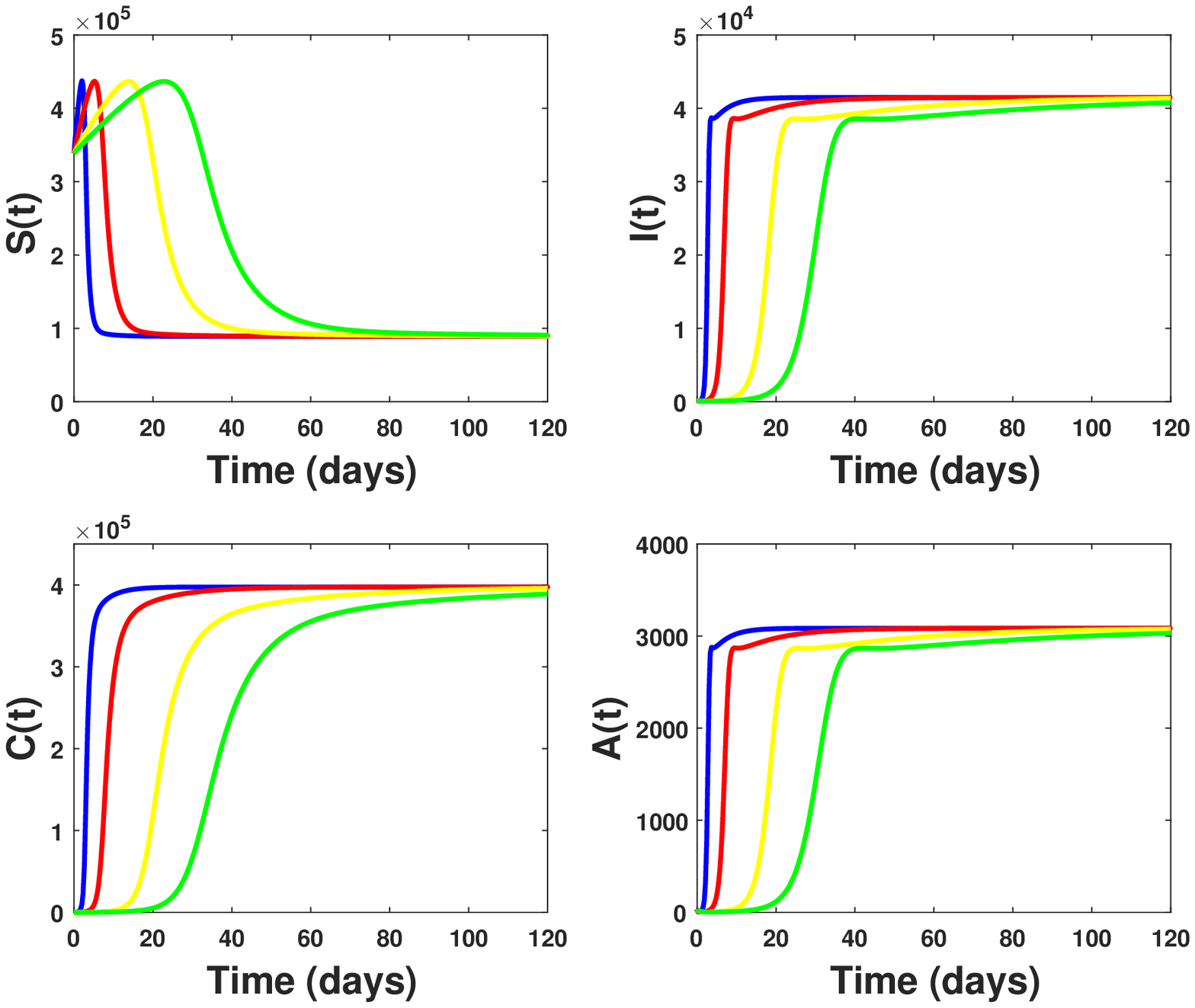}
\captionof{figure}{Stability of the endemic equilibrium $E^{*}$
for the fractional-order SICA model \eqref{sys24} with 
$\theta=0.5$ (blue), $\theta=0.7$ (red), $\theta=0.9$ (yellow), 
and $\theta=1$ (green).}
\label{fig2}
\end{figure}
\end{center}


\subsection{Example 2: an HIV cellular model}

We consider the HIV infection model with cure rate of infected cells in eclipse stage 
as proposed by Maziane et al. \cite{Maziane1}. This model contains also four variables: 
the uninfected CD4$^+$T cells $(T)$, infected cells in the eclipse stage 
(unproductive cells, denoted by $E$), productive infected cells $(I)$, 
and free virus particles $(V)$. The model is given by the following 
non-linear system of ODEs:
\begin{equation}
\label{TEIV}
\begin{cases}
\dfrac{dT(t)}{dt}=\lambda-\mu_T T(t)-f\left(T(t),V(t)\right)V(t)+\rho E(t),\\[5pt]
\dfrac{dE(t)}{dt}=f\left(T(t),V(t)\right)V(t)-(\mu_E+\rho+\gamma)E(t),\\[5pt]
\dfrac{dI(t)}{dt}=\gamma E(t)-\mu_I I(t),\\[5pt]
\dfrac{dV(t)}{dt}=kI(t)-\mu_V V(t),
\end{cases}
\end{equation}
where $\lambda$ is the recruitment rate of uninfected cells. 
The constants $\mu_T$, $\mu_E$, $\mu_I$, and $\mu_V$ represent the death rates 
of uninfected cells, unproductive cells, productive cells, and virus, respectively. 
The constant $\rho$ is the rate at which the unproductive infected cells may revert 
to the uninfected cells. The incidence of HIV infection of health CD4$^+$T cells 
has the form 
$$
f(T,V) = \frac{\beta T}{1+\alpha_1 T+\alpha_2 V+\alpha_3 TV},
$$ 
where $\beta$ is the infection rate and $\alpha_1$, $\alpha_2$, $\alpha_3\geq 0$ 
are non-negative constants. The constant $\gamma$ is the rate at which
infected cells in the eclipse stage become productive infected cells and the constant
$k$ is the rate of production of virions by infected cells.
The basic reproduction number $R_0$ is given by
\begin{equation*}
R_0=\dfrac{\lambda\beta k\gamma}{\mu_I 
\mu_V (\lambda \alpha_1+\mu_T)(\rho+\mu_E+\gamma)},
\end{equation*}
which is the average number of secondary infections produced by
one productive infected cell during the period of infection 
when all cells are uninfected. Moreover,
Maziane et al. \cite{Maziane1} show that model (\ref{TEIV}) 
is globally asymptotically stable. The proof is done by using 
the following Lyapunov function in $\mathbb{R}^4_+$:
\begin{eqnarray}
\label{L:ex2}
L(T,E,I,V)
&=& \Psi_{1}(T)+\dfrac{\rho+\mu_E+\gamma}{\gamma}\Psi_{2}(I)
+\Psi_{3}(E)+\dfrac{\mu_I(\rho+\mu_E+\gamma)}{k\gamma}\Psi_{4}(V)\nonumber\\
&\quad& +\dfrac{\rho(1+\alpha_2 \overline{V})}{2(1+\alpha_1\overline{T}
+\alpha_2\overline{V}+\alpha_3\overline{T}\overline{V})} 
\left(T-\overline{T}+E-\overline{E}\right)^2\nonumber\\
&=& T-\overline{T}-\int^{T}_{\overline{T}}
\dfrac{f(\overline{T},\overline{V})}{f(\theta,\overline{V})}d\theta
+ \dfrac{\rho+\mu_E+\gamma}{\gamma} \left(I-\overline{I}
+\int^{I}_{\overline{I}}\dfrac{\overline{I}}{\theta}d\theta\right)\\
&\quad& +E-\overline{E}+\int^{E}_{\overline{E}}\dfrac{\overline{E}}{\theta}d\theta
+\dfrac{\mu_I(\rho+\mu_E+\gamma)}{k\gamma}\left(V-\overline{V}
+\int^{V}_{\overline{V}}\dfrac{\overline{V}}{\theta}d\theta\right)\nonumber\\
&\quad& +\dfrac{\rho(1+\alpha_2 \overline{V})}{2(1+\alpha_1\overline{T}
+\alpha_2\overline{V}+\alpha_3\overline{T}\overline{V})} 
\left(T-\overline{T}+E-\overline{E}\right)^2,\nonumber
\end{eqnarray}
where $\overline{u}=(\overline{T},\overline{E},\overline{I},\overline{V})$ 
is an arbitrary equilibrium of system (\ref{TEIV}) and when 
$\overline{\ \cdot\ }$ is zero, for some equilibrium coordinate, 
the corresponding integral term vanishes.

Now, we propose the following fractional-order HIV infection model 
with cure rate of infected cells in eclipse stage:
\begin{equation}
\label{TEIVfra}
\begin{cases}
_{0}^{C}D_{t}^{\alpha}T(t)
=\lambda-\mu_T T(t)-f\left(T(t),V(t)\right)V(t)+\rho E(t),\\[5pt]
_{0}^{C}D_{t}^{\alpha}E(t)
=f\left(T(t),V(t)\right)V(t)-(\mu_E+\rho+\gamma)E(t),\\[5pt]
_{0}^{C}D_{t}^{\alpha}I(t)
=\gamma E(t)-\mu_I I(t),\\[5pt]
_{0}^{C}D_{t}^{\alpha}V(t)
=kI(t)-\mu_V V(t),
\end{cases}
\end{equation}
subject to initial conditions
\begin{equation}
\label{TEIVfraini}
T(0)\geq 0,\quad E(0)\geq 0,
\quad I(0)\geq 0,\quad V(0)\geq 0,
\end{equation}
where $0<\alpha\leq 1$. 

\begin{remark}
It follows from the results of Boukhouima et al. \cite{Boukhouima2}
that system \eqref{TEIVfra}--\eqref{TEIVfraini} has a unique global solution. 
\end{remark}

Let $u(t)=(T(t),E(t),I(t),V(t))$ be a solution of (\ref{TEIVfra})--(\ref{TEIVfraini}). 
According to our Corollary~\ref{cor24}, since $L$ given by \eqref{L:ex2} is a Lyapunov 
function for the ordinary differential equations (\ref{TEIV}) of the form described 
by (\ref{quad functions}), then $L$ is also a Lyapunov function for the 
fractional-order system (\ref{TEIVfra})--(\ref{TEIVfraini}).


\section{Conclusion}
\label{sec:04}

Mathematical models using ordinary differential equations have proved 
valuable to understand the interactions and the evolution of different 
biological phenomena \cite{MR2573923}. However, such models ignore memory 
effects and long-range interactions, which exist in most biological systems. 
For this reason, fractional differential equations have recently been used 
to model more accurately such real processes: see, e.g.,
\cite{MR4060533,MR3997113,MR3940232,MR3984125}. 
As is well known, stability analysis is an important performance metric 
for any dynamical system \cite{MR3671999,MR3783261,MR3980195}. 
The fractional-order extension of Lyapunov's 
direct method becomes, naturally, one of main interesting techniques 
to study the global behavior of fractional-order models without solving 
explicitly such systems \cite{MR2821032}. 
This method provides a way to determinate 
asymptotic stability by constructing a suitable Lyapunov function, 
which is not easy to find. Here, a new lemma for Caputo fractional 
derivatives of order $ 0<\alpha\leq 1$, of some functions, 
is presented. Our approach consists to construct Lyapunov functions for 
FDEs using Lyapunov functions for ODEs. This result is shown to be
useful to determine the asymptotic stability of fractional-order 
systems in biology. In addition, the inequality obtained by 
Vargas-De-Le\'{o}n \cite{Vargas} for Volterra-type Lyapunov 
functions is generalized and improved.
 
On the other hand, two proposed fractional HIV models are studied to show 
the effectiveness of our method. Firstly, we demonstrated  
the global stability of the endemic equilibrium of a fractional 
SICA model based on the Lyapunov functional proposed  
by Silva and Torres \cite{Silva}, when the basic reproduction 
number is greater than one, that is, $R_0>1$. Secondly, 
we have improved the global stability of the disease-free equilibrium 
by constructing an appropriate Lyapunov functional when $R_0 \leq 1$. 
To validate these theoretical results, we carried out some numerical simulations 
for different values of the order of the fractional derivative. 
We also remarked that when the value of this order is small, 
the solution of the fractional SICA model converges rapidly to the steady-states. 
The same approach is applied to prove the global stability of any arbitrary 
equilibrium point for a fractional HIV cellular model.

Time delay is a very important element in mathematical biology 
\cite{MR3999697}. Generally, it represents the incubation time, 
the time needed for the activation of immunity or other biological 
effects \cite{MR3562914}. 
To study the global stability of delayed systems, we often 
combine Volterra-type Lyapunov functions with others depending 
on the delays. Our method can be useful in biology
to extend such functions to fractional systems with delays. 
This is under investigation
and will be addressed elsewhere. 


\section*{Acknowledgment} 

Torres is supported by the Portuguese Foundation for Science and Technology 
(FCT -- Funda\c{c}\~ao para a Ci\^encia e a Tecnologia), 
within project UIDB/04106/2020 (CIDMA). The authors are strongly grateful 
to two anonymous referees for their suggestions and invaluable comments.



\medskip

Submitted to Chaos, Solitons \& Fractals May 25, 2020; 
revised July 25, 2020; accepted for publication August 19, 2020.

\medskip


\end{document}